\documentclass[a4paper,12pt,final]{amsart}
\usepackage{times,a4wide,mathrsfs, nameref,showkeys,amssymb,amsmath,amsthm,enumerate,xypic,tikzsymbols,dsfont}

\newcommand{\C}{\mathbb{C}}

\newcommand{\QQ}{\mathbb{Q}}

\newcommand{\PP}{\mathbb{P}}

\DeclareMathOperator{\CH}{CH}

\DeclareMathOperator{\sym}{Sym}

\newtheorem{theorem}{Theorem}[section]

\newtheorem{proposition}[theorem]{Proposition}
\newtheorem{conjecture}[theorem]{Conjecture}
\newtheorem{remark}[theorem]{Remark}

\newtheorem{convention}{Conventions}

\newtheorem{notation}[theorem]{Notation}

\newtheorem{nonumbering}{Theorem}

\newtheorem{nonumberingt}{Acknowledgements}

\begin{document}

\author[Robert Laterveer]
{Robert Laterveer}

\address{Institut de Recherche Math\'ematique Avanc\'ee,
CNRS -- Universit\'e 
de Strasbourg,\
7 Rue Ren\'e Des\-car\-tes, 67084 Strasbourg CEDEX,
FRANCE.}
\email{robert.laterveer@math.unistra.fr}

\title[A strong version of the BV conjecture]{A strong version of the Beauville--Voisin conjecture for certain hyper-K\"ahler fourfolds}

\begin{abstract} A strong version of the Beauville--Voisin conjecture asserts that for hyper-K\"ahler varieties, the subring of the Chow ring generated by divisors, Chern classes and Lagrangian constant cycle subvarieties should inject into cohomology. We verify this in codimension larger than two for Hilbert squares of K3 surfaces, for Fano varieties of lines in cubic fourfolds, and for double EPW sextics.
\end{abstract}

\thanks{\textit{2020 Mathematics Subject Classification:} 14C15, 14C25, 14C30, 14J42}
\keywords{Chow groups, hyper-K\"ahler varieties, Beauville--Voisin conjecture, Beauville ``splitting property'' conjecture, constant cycle subvarieties}
\thanks{This work is supported by ANR grant ANR-20-CE40-0023.}

\maketitle

\section{Introduction}

Given a complex smooth projective variety $X$, let $A^\ast(X)=\oplus_i A^i(X)$ denote the Chow ring with $\QQ$-coefficients.
For hyper-K\"ahler varieties, the Chow ring is expected to have a particularly nice structure. Notably, in the wake of foundational results on the Chow ring of K3 surfaces \cite{BV}, Beauville has formulated the following conjecture:

\begin{conjecture}[Beauville \cite{Beau}]\label{conj1} Let $X$ be a hyper-K\"ahler variety. The $\QQ$-algebra
\[   \langle A^1(X)\rangle\ \ \subset\ A^\ast(X)  \]
generated by divisors injects into cohomology, under the cycle class map.
\end{conjecture}

Conjecture \ref{conj1} is known for Lagrangian fibrations \cite{Rie} and for Hilbert schemes of K3 surfaces \cite{MN}, but is otherwise wide open.

Voisin has suggested the following reinforced version of Beauville's conjecture (this is now commonly known as the ``Beauville--Voisin conjecture''):

\begin{conjecture}[Voisin \cite{V17}]\label{conj2} Let $X$ be a hyper-K\"ahler variety. The $\QQ$-algebra
\[   \langle A^1(X), c_j(T_X)\rangle\ \ \subset\ A^\ast(X)  \]
generated by divisors and Chern classes of the tangent bundle injects into cohomology, under the cycle class map.
\end{conjecture}

Conjecture \ref{conj2} is known for low-dimensional Hilbert schemes of K3 surfaces \cite{V17}, for Fano varieties of lines on cubic fourfolds \cite{V17}, for generalized Kummer varieties \cite{Fu} and for double EPW sextics \cite{L}. Otherwise, this is wide open.

More recently, Voisin has proposed yet a stronger version of the conjecture, involving {\em Lagrangian constant cycle subvarieties\/}. A subvariety $Z\subset X$ is called a constant cycle subvariety if all points of $Z$ represent the same class in the Chow group of zero-cycles of $X$; it is Lagrangian if its dimension equals its codimension (note that this definition implies that the symplectic form of $X$ restricts to 0 on the non-singular locus of $Z$ \cite[Corollary 1.2]{Vcoiso}, whence the "Lagrangian" adjective).

\begin{conjecture}[Voisin \cite{Vcoiso}]\label{conj3} Let $X$ be a hyper-K\"ahler variety. The $\QQ$-algebra
\[   \langle A^1(X), c_j(T_X), Z_k\rangle\ \ \subset\ A^\ast(X)  \]
generated by divisors, Chern classes and Lagrangian constant cycle subvarieties $Z_k\subset X$ injects into cohomology, under the cycle class map.
\end{conjecture}

The goal of this paper is to find some verification of Conjecture \ref{conj3}. The three main results concern three of the most famous families of hyper-K\"ahler fourfolds:

\begin{nonumbering}[=Theorem \ref{main0}] Let $X=S^{[2]}$ be the Hilbert square of a K3 surface $S$. Let
   \[ R^\ast(X):=\bigl\langle  A^1(X), c_j(T_X),  A^2_{(0)}(X), Z_k \bigr\rangle\ \ \subset\ A^\ast(X) \]
  be the graded $\QQ$-algebra generated by divisors, Chern classes, the subgroup $A^2_{(0)}(X)\subset A^2(X)$ and constant cycle surfaces $Z_k\subset X$.
  Then $R^i(X)$ injects into cohomology via the cycle class map for $i\ge 3$.
  
  Moreover, $R^2(X)$ injects into cohomology if and only if $A^2_{(0)}(X)$ injects into cohomology.
\end{nonumbering}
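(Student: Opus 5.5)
The plan is to work with the Fourier decomposition of Shen--Vial for $X=S^{[2]}$, so that $A^i(X)=\bigoplus_j A^i_{(j)}(X)$. I want to show that every generator of $R^\ast(X)$ lies in $A^\ast_{(0)}(X)$ and that $A^\ast_{(0)}(X)$ is a subring. Then $R^\ast(X)\subset A^\ast_{(0)}(X)$, and the statement follows from the known injectivity of $A^i_{(0)}(X)\to H^{2i}(X)$ for $i=3,4$, together with $A^i_{(0)}=A^i$ for $i\le 1$.

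For the generators I would cite the following. Shen--Vial show that $A^1(X)=A^1_{(0)}(X)$, that $c_2(T_X)\in A^2_{(0)}(X)$, and that $A^\ast_{(0)}(X)$ is closed under intersection product for $S^{[2]}$; the multiplicativity of the splitting is known for Hilbert squares. They also show that $A^4_{(0)}(X)=\QQ\, c_X$, with $c_X$ the Beauville--Voisin class, and that $A^3_{(0)}(X)$ injects into cohomology; the latter should follow from $A^3_{(0)}=A^1\cdot A^2_{(0)}$ or from the explicit description of the Fourier transform. The injectivity for $R^2$ is then immediate once $R^2(X)=A^2_{(0)}(X)$, and this holds because $A^1\cdot A^1\subset A^2_{(0)}$ and $c_2\in A^2_{(0)}$.

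The key new input concerns a constant cycle surface $Z\subset X$, whose class $[Z]\in A^2(X)$ has to be shown to lie in $A^2_{(0)}(X)$. Write $[Z]=z_0+z_2$ with $z_2\in A^2_{(2)}(X)$; I need $z_2=0$. Since $Z$ is constant cycle, every point of $Z$ has the class $c_Z\in A^4(X)$. Taking a point of $Z$, there is a point of $X$ whose class lies in $A^4_{(0)}$, and I expect Voisin's result that a Lagrangian constant cycle subvariety has $c_Z=c_X$. Then $[Z]\cdot D^2$ and $[Z]\cdot A^2$ can be computed: intersecting $Z$ with a codimension-$2$ cycle meeting it properly gives a zero-cycle supported on $Z$, hence a multiple of $c_X$. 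So $[Z]\cdot A^2(X)\subset \QQ c_X$. The mechanism for $S^{[2]}$ is that $A^2_{(2)}(X)$ is detected by intersection with $A^2(X)$: the pairing $A^2_{(2)}\times A^2_{(2)}\to A^4_{(4)}$ is nondegenerate enough, via the Fourier transform $\mathcal F$, which maps $A^2_{(2)}$ isomorphically onto a piece of $A^2$, and the relation $\mathcal F(\alpha)\cdot\beta$ relates to $\alpha\cdot\mathcal F(\beta)$ through the degree-$4$ component $A^4_{(4)}$. Concretely, I would show that $z_2\cdot A^2_{(2)}(X)=0$ in $A^4_{(4)}(X)$ forces $z_2=0$, using Shen--Vial's description of $A^2_{(2)}(X)\cong A^0_{hom}$-type pieces coming from $A^2_{hom}(S)$, namely $A^2_{(2)}(X)=\{\text{classes } \alpha \times [\text{pt}]$-type$\}$ transported through the incidence correspondence. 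There the pairing reduces to the intersection pairing on $A^2_{hom}(S)\otimes A^2_{hom}(S)\to A^4_{hom}(S\times S)$, which is injective by the structure of $S\times S$.

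I expect this last step to be the main obstacle, since it requires a nondegeneracy statement in Chow groups, which is hard in general. If it fails, the fallback is to prove directly that $\mathcal F([Z])$ lies in $A^2_{(0)}$ by computing $\mathcal F([Z])=p_{2*}(L^2\cdot p_1^*[Z])$, where $L$ is the Beauville--Bogomolov class. Since $L|_{Z\times X}$ involves $p_1^*$ of points of $Z$, all of which equal $c_X$, the constant cycle property lets us replace $Z$ by $c_X$-type contributions, and $\mathcal F$ respects the grading. This fallback route seems more robust, and I would attempt it first.
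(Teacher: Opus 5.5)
\textbf{Verdict.} Your overall architecture matches the paper: put every generator of $R^\ast(X)$ into $A^\ast_{(0)}(X)$, use multiplicativity of the Shen--Vial bigrading, then use that $A^3_{(0)}(X)$ injects into cohomology and that $A^4_{(0)}(X)=\QQ c_X$. You also correctly extract the key constant-cycle input, $[Z]\cdot A^2(X)\subset \QQ c_X$. But there is a genuine gap at the decisive step $z_2=0$: neither of your two routes establishes it.

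\textbf{Your main route.} You test $z_2$ against $A^2_{(2)}(X)$, and those products land in $A^4_{(4)}(X)$. So you need the pairing $A^2_{(2)}\times A^2_{(2)}\to A^4_{(4)}$ to be nondegenerate. That amounts to a nondegeneracy statement for exterior products $\alpha\times\beta$ of homologically trivial zero-cycles on $S\times S$, and nothing of the kind is known. The injectivity of $A^2_{hom}(S)\otimes A^2_{hom}(S)\to A^4(S\times S)$ that you invoke is not a theorem. It is in fact expected to fail: Voisin's conjecture $a\times a'=a'\times a$ would put every antisymmetric tensor in the kernel. Note also that $\mathcal F$ sends $A^2_{(2)}$ to $A^4_{(2)}$, not into $A^2$.

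\textbf{Your fallback.} As written it is not an argument. The constant cycle property by itself says nothing about $L|_{Z\times X}$, so ``replacing $Z$ by $c_X$-type contributions'' has no content. To make that route work you would need an explicit relation for $L^2$, such as Shen--Vial's quadratic relation ($L^2=2\Delta_X$ plus terms built from $l_iL$ and $l_il_j$). That relation gives $(L^2)_\ast[Z]=2[Z]+a\,l$ with $a\in\QQ$. Here the constant cycle property enters only through the zero-cycle $l\cdot[Z]\in\QQ c_X$, together with $L_\ast c_X=0$. Since $(L^2)_\ast$ kills $A^2_{(2)}$ and preserves $A^2_{(0)}$, this would force $z_2=0$.

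\textbf{The fix (the paper's proof).} Pair $[Z]$ with a class in $A^2_{(0)}(X)$ rather than in $A^2_{(2)}(X)$; you already have $[Z]\cdot A^2(X)\subset\QQ c_X$. Take $S_{\mathfrak o}=p(q^{-1}\mathfrak o_S)$, the surface of length-2 subschemes containing the Beauville--Voisin point. Its class lies in $A^2_{(0)}(X)$. By multiplicativity, $z_0\cdot S_{\mathfrak o}\in A^4_{(0)}(X)=\QQ c_X$ and $z_2\cdot S_{\mathfrak o}\in A^4_{(2)}(X)$. Since $[Z]\cdot S_{\mathfrak o}\in\QQ c_X$, this gives $z_2\cdot S_{\mathfrak o}=0$. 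Multiplication $\cdot S_{\mathfrak o}\colon A^2_{(2)}(X)\to A^4_{(2)}(X)$ is an isomorphism \cite[Lemma 15.4]{SV}, hence $z_2=0$.

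\textbf{A minor point.} Your suggested justification $A^3_{(0)}=A^1\cdot A^2_{(0)}$ would not give injectivity of $A^3_{(0)}(X)$, because injectivity of $A^2_{(0)}(X)$ is open. The actual reason is that the projector defining $A^3_{(0)}(X)$ factors through a surface, where Murre's conjecture D is known.
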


Here, $A^2_{(0)}(X)$ refers to the Shen--Vial splitting of the Chow ring of $X$ \cite{SV}, which is a key ingredient in the proof.

\begin{nonumbering}[=Theorem \ref{main1}] Let $X=F(Y)$ be the Fano variety of lines of a smooth cubic fourfold $Y\subset\PP^5$.
Let
  \[ R^\ast(X):=\bigl\langle  A^1(X), c_j(T_X),  S_k \bigr\rangle\ \ \subset\ A^\ast(X) \]
  be the graded $\QQ$-algebra generated by divisors, Chern classes, and constant cycle surfaces $S_k\subset X$.
  Then $R^i(X)$ injects into cohomology via the cycle class map for $i\ge 3$.
  
  Moreover, $R^2(X)$ injects into cohomology provided $A^2_{(0)}(X)$ injects into cohomology.
\end{nonumbering}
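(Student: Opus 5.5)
The plan is to reduce everything to the Shen--Vial Fourier decomposition of $A^\ast(X)$ for $X=F(Y)$ (\cite{SV}), and to show that every generator of $R^\ast(X)$ lies in the ``$(0)$-part''. Then $R^\ast(X)\subset A^\ast_{(0)}(X)$, and we finish using what is known about injectivity of $A^i_{(0)}(X)$ into cohomology.

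First, the generators. For divisors we have $A^1(X)=A^1_{(0)}(X)$ since $H^1=0$. For Chern classes, $c_j(T_X)\in A^j_{(0)}(X)$ by Shen--Vial, who show the $c_j$ are polynomials in $g$ and $c=c_2(\mathcal{Q})$, both in $A_{(0)}$. The new input is a constant cycle surface $S_k\subset X$. Its class lies in $A^2(X)=A^2_{(0)}\oplus A^2_{(2)}$, and I would show that the $A^2_{(2)}$-component vanishes. My first approach uses Shen--Vial's characterization
\[ A^2_{(2)}(X)=\{\alpha\in A^2(X) : \alpha\cdot l = 0 \text{ and } \alpha \text{ is in the image of } \mathcal{F} \text{ on } A^4_{(2)}\} \]
(equivalently, $A^2_{(2)}=I_\ast A^4_{(2)}$-type descriptions), together with the fact that intersecting with $l$ (or with $g^2$) detects $A^2_{(2)}$ up to homological triviality. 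For a constant cycle surface, $[S_k]\cdot D$ restricted to $S_k$ is supported on $S_k$, so all zero-cycles $[S_k]\cdot\beta$ are multiples of the canonical class $\mathfrak{o}_X\in A^4_{(0)}$.

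The key step is to convert this into the statement $\mathcal{F}([S_k])\in A^2_{(0)}$. I would write $\mathcal{F}(\sigma)=\frac{1}{?}(p_2)_\ast(L^2\cdot p_1^\ast\sigma)$ with $L=$ the Fourier cycle $\frac13(g_1^2+\tfrac32 g_1g_2+g_2^2)-\tfrac13(c_1+c_2)+I$-type expression, and expand $L^2$. The terms from $g_i,c_i$ give $A_{(0)}$-classes by multiplicativity. The terms involving the incidence correspondence $I$ give $I_\ast([S_k]\cdot \text{stuff})$. Here $[S_k]\cdot\text{stuff}$ is a cycle supported on $S_k$, and its pushforward through $I$ acts via points of $S_k$, which all equal $\mathfrak o_X$ in $A_0(X)$. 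Hence these terms land in $I_\ast$ of multiples of $\mathfrak o_X$ and its products with $g$, all of which lie in $A_{(0)}$ by Shen--Vial. Concretely, I would use the decomposition of the diagonal $\Delta_{S_k}$ restricted: because $S_k$ is constant cycle, the graph of $S_k\to X$ is rationally equivalent, modulo cycles supported on $D\times X$ for divisors $D\subset S_k$, to $S_k\times \mathfrak o$. I expect this Bloch--Srinivas style argument, and keeping track of the boundary terms supported on curves, to be the main obstacle. Those boundary terms are curves of $X$, and I would need their contribution to lie in the $(0)$-part after applying $\mathcal F$. Since they have degree $\ge3$ for the relevant pieces, I would use that $A^3(X)=A^3_{(0)}\oplus A^3_{(2)}$, where $A^3_{(2)}$ is killed by the relevant operations.

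Granting $[S_k]\in A^2_{(0)}(X)$, multiplicativity of the Shen--Vial splitting (known for $F(Y)$, with $A^i_{(0)}\cdot A^j_{(0)}\subset A^{i+j}_{(0)}$) gives $R^\ast(X)\subset A^\ast_{(0)}(X)$. Finally, Shen--Vial show that $A^4_{(0)}(X)=\QQ\,\mathfrak o_X$ and that $A^3_{(0)}(X)$ injects into cohomology (it is spanned by $g\cdot A^2_{(0)}$-type classes, and $A^3_{(0)}\cong A^1$ via $\mathcal F$). This yields injectivity of $R^i$ for $i\ge3$. For $i=2$ the injectivity follows immediately once $A^2_{(0)}(X)$ injects, while $i\le1$ is trivial.
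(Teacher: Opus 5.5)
Your overall architecture matches the paper's: put every generator in the $(0)$-part, then use injectivity of $A^3_{(0)}(X)$ and $A^4_{(0)}(X)=\QQ\,\mathfrak o_X$. However, the one new step, $[S_k]\in A^2_{(0)}(X)$, is not proved, and the route you sketch for it cannot succeed as formulated.

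Write $[S_k]=\sigma_0+\sigma_2$ with $\sigma_i\in A^2_{(i)}(X)$. By the definition of the Shen--Vial grading, $\mathcal F(\sigma_2)\in A^4(X)$. So the term $\tfrac12(p_2)_\ast(L^2\cdot p_1^\ast[S_k])$ that you expand equals $\mathcal F(\sigma_0)$ and does not see $\sigma_2$ at all; showing it lies in $A^2_{(0)}$ gives no information. The part of $\mathcal F([S_k])$ that detects $\sigma_2$ is the zero-cycle component $\tfrac16(p_2)_\ast(L^3\cdot p_1^\ast[S_k])$, which your sketch never addresses. The computation has further problems:
\begin{enumerate}
\item Your expansion passes over the $I^2$-term, which via the quadratic relation for $I$ brings back $[S_k]$ itself.
\item The Bloch--Srinivas boundary terms, which you rightly flag as the main obstacle, are dismissed only by the unsupported claim that ``$A^3_{(2)}$ is killed by the relevant operations''.
\end{enumerate}

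The missing idea is short, and you already name both ingredients without combining them. Intersect with $g^2\in A^2_{(0)}(X)$:
\begin{enumerate}
\item The known special cases of multiplicativity \cite[Proposition A.7]{FLV}, \cite[Proposition 22.3]{SV} give $\sigma_0\cdot g^2\in A^4_{(0)}(X)=\QQ\,\mathfrak o_X$ and $\sigma_2\cdot g^2\in A^4_{(2)}(X)$.
\item The zero-cycle $[S_k]\cdot g^2$ is supported on $S_k$, so it is a multiple of the common class of points of $S_k$. That this class is $\mathfrak o_X$ is Voisin's result \cite[Proposition 4.5]{Vcoiso}. You use this implicitly with a ``so'', but the constant cycle property alone does not identify the class.
\item Hence $\sigma_2\cdot g^2=0$. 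Since $\cdot g^2\colon A^2_{(2)}(X)\to A^4_{(2)}(X)$ is an isomorphism \cite[Theorems 2.2 and 2.4]{SV}, you get $\sigma_2=0$.
\end{enumerate}

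Also, do not invoke full multiplicativity of the Shen--Vial decomposition for $F(Y)$; it is not known in general. For $i=3$ you only need $A^1\cdot A^2_{(0)}\subset A^3_{(0)}$ \cite[Proposition A.7]{FLV}. For $i=4$ it is simpler to note two things. Any monomial containing some $S_k$ is a multiple of $\mathfrak o_X$ by Voisin's result. The remaining monomials lie in $\langle A^1(X),c_j(T_X)\rangle$, whose degree-4 part injects into cohomology by \cite{V17}.
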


Again, $A^2_{(0)}(X)$ refers to the Shen--Vial splitting of the Chow ring of $X$ \cite{SV}.

\begin{nonumbering}[=Theorem \ref{main2}] Let $X$ be a double EPW sextic with covering involution $\iota$. Let
  \[ R^\ast(X):=\bigl\langle  A^1(X), c_j(T_X), A^2(X)^+, S_k \bigr\rangle\ \ \subset\ A^\ast(X) \]
  be the graded $\QQ$-algebra generated by divisors, Chern classes, $\iota$-invariant codimension 2 classes and constant cycle surfaces $S_k\subset X$.
  Then $R^i(X)$ injects into cohomology via the cycle class map for $i\ge 3$.
  
  Moreover, $R^2(X)$ injects into cohomology if and only if $A^2(X)^+$ injects into cohomology.
\end{nonumbering}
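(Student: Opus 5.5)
The statement to prove is the last one, Theorem \ref{main2} for double EPW sextics. The strategy is to place every generator of $R^\ast(X)$ inside the part of a multiplicative bigraded decomposition of $A^\ast(X)$ that injects into cohomology. I will use the multiplicative Chow--K\"unneth decomposition (MCK) of double EPW sextics established in earlier work on Conjecture \ref{conj2} \cite{L}. It gives a bigrading $A^i(X)=\oplus_j A^i_{(j)}(X)$ that is compatible with intersection product and has $c_j(T_X)\in A^\ast_{(0)}(X)$. The divisors lie in $A^1(X)=A^1_{(0)}(X)$ because $h^{2,0}$ only contributes to $A^1$ trivially.

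The first key input concerns the $\iota$-action. The covering involution is anti-symplectic, so it acts as $-1$ on $H^{2,0}$. I expect from the Shen--Vial type description that
\[ A^2(X)^+ = A^2_{(0)}(X), \qquad A^2_{(2)}(X)\subset A^2(X)^-, \]
and that $A^4(X)=A^4_{(0)}\oplus A^4_{(2)}\oplus A^4_{(4)}$ with $\iota$ acting by $+1,-1,+1$ on the three pieces. This is the analogue, via Bloch--Beilinson, of the known action on $A^4$, and it should follow from Voisin's/my previous results on $A^\ast(X)^+$. The $+$ part comes from the EPW sextic, whose quotient has $A^4_{hom}=0$ after the relevant analysis.

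Next come the constant cycle surfaces $S_k$. Take a point $x\in S_k$; its class $[x]$ equals the canonical class $\mathfrak{o}_X\in A^4_{(0)}(X)$. I need to show that $[S_k]\in A^2_{(0)}(X)$. The approach is Voisin's argument: the class $[S_k]\cdot D$ for a divisor $D$ is a multiple of $\mathfrak{o}_X$, and more strongly $[S_k]$ acts trivially via intersection on $A^2_{(2)}$. The cleaner route is to apply the MCK projectors and write $[S_k]=\sum_j \pi^{(j)}_\ast[S_k]$. Because the graph of the inclusion $S_k\to X$ composed with $\pi^4_X$ factors through a point, the components $[S_k]_{(2)}$ vanish. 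Concretely, I use $\pi^{6}_X,\pi^{4}_X$ acting on $A^2$ via $\Delta_X$ restricted to $S_k\times X$, and the constant cycle property forces $(S_k\times X)\cdot\Delta_{(2)}=0$. I expect this step to be the main obstacle. If the direct argument fails, I will instead deduce $[S_k]$ is $\iota$-invariant up to a homologically trivial part. This uses the fact that $\iota(S_k)$ is again constant cycle with the same point class, since $\iota_\ast\mathfrak{o}_X=\mathfrak{o}_X$.

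Granting this, $R^\ast(X)\subset A^\ast_{(0)}(X)$ by multiplicativity. Injectivity in degrees $\ge3$ then reduces to $A^3_{(0)}(X)$ and $A^4_{(0)}(X)=\QQ\mathfrak{o}_X$ injecting into cohomology. For $A^4$ this is immediate. For $A^3_{(0)}$, it holds because $A^3_{(0)}(X)$ is generated by $A^1\cdot A^2_{(0)}$ plus divisor-type classes, and the hard Lefschetz-type isomorphism $h\cdot\colon A^3_{(0)}\to A^4_{(0)}$ is controlled. Alternatively, I use the known fact that $A^3_{hom}(X)=A^3_{(2)}(X)$ for double EPW sextics. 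For $R^2$, we have $R^2(X)=A^2(X)^+ + \langle A^1\cdot A^1, c_2, [S_k]\rangle$, which all lie in $A^2_{(0)}=A^2(X)^+$. So injectivity of $R^2$ is equivalent to injectivity of $A^2(X)^+$, where the "only if" direction holds because $A^2(X)^+\subset R^2(X)$.
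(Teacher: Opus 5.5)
\textbf{There is a genuine gap.} Your argument rests entirely on a multiplicative Chow--K\"unneth bigrading $A^\ast_{(\ast)}(X)$ for double EPW sextics, but no such splitting is known. The paper says so explicitly, and \cite{L} proves the Beauville--Voisin statement through the relation of Proposition \ref{deltaminus}, not through an MCK decomposition. So the facts you invoke are unavailable: $A^2(X)^+=A^2_{(0)}(X)$, the signs of $\iota$ on the pieces of $A^4(X)$, and $A^3_{hom}(X)=A^3_{(2)}(X)$.

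The identification $A^2(X)^+=A^2_{(0)}(X)$ is moreover wrong even conjecturally. Since $A^1(X)^+=\QQ h$, as soon as $X$ has Picard rank at least $2$ there is a nonzero $D\in A^1(X)^-$. Then $h\cdot D\in A^1\cdot A^1$ is anti-invariant, so your last step ``$R^2(X)\subset A^2_{(0)}(X)=A^2(X)^+$'' fails.

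The step you yourself flag, $[S_k]_{(2)}=0$, is never carried out. The constant cycle property only controls zero-cycles supported on $S_k$, and the claim that $\pi^4_X$ composed with the graph ``factors through a point'' does not follow from it. In the Hilbert-square and Fano cases that information is transported back to $A^2$ by an isomorphism $A^2_{(2)}\to A^4_{(2)}$ given by intersecting with a class of $A^2_{(0)}$; you have no substitute for this. Your fallback (``$\iota$-invariant up to a homologically trivial part'') is inaccurate, since $S^-$ may be a nonzero $h\cdot D$ in cohomology. It also leaves untouched the real difficulty, which is killing a homologically trivial anti-invariant class in $A^2(X)$. Finally, you never justify that the point of an arbitrary constant cycle surface is the canonical class.

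\textbf{The missing idea is the paper's route, which avoids any bigrading.} Put $P^\ast(X)=\langle A^1(X),c_j(T_X),A^2(X)^+\rangle$. This injects into cohomology in degrees $\ge 3$ by \cite[Theorem 3.1]{L}, so it suffices to prove $R^\ast(X)=P^\ast(X)$.

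For $i=4$: the fixed surface $S_0$ is constant cycle (Zhang) with class $5h^2-\frac13 c$ (Ferretti). Voisin's argument then forces every constant cycle surface to meet $S_0$, so all of them give the same point, and that point lies in $P^4(X)=\QQ h^4$.

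For $i=2$: write $S=S^++S^-$. Using $\cdot h\colon H^2(X,\QQ)^-\xrightarrow{\cong}H^4(X,\QQ)^-$, pick $D\in A^1(X)^-$ with $S^--h\cdot D\in A^2_{hom}(X)^-$. Now let $\Delta_X^-=B+\mathcal J_1+\dots+\mathcal J_r$ act on this class:
\begin{enumerate}
\item $B$ kills homologically trivial cycles.
\item Terms in which $I$ is multiplied by a class pulled back from the second factor factor through $A^0_{hom}(X)$ or $A^1_{hom}(X)$, both zero.
\item The remaining terms factor through the zero-cycle $(S^--h\cdot D)\cdot a$ with $a\in\langle h^2,c\rangle$. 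This zero-cycle lies in $\QQ h^4$, by the constant cycle property and the $i=4$ step, and has degree $0$, hence vanishes.
\end{enumerate}
So $S^-=h\cdot D$ and $S\in P^2(X)$.
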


In these three theorems, the $i=4$ part readily reduces to results that were already known (by \cite{V17} resp. \cite{LV}). The
new part of these theorems is thus in codimension $i=3$. The codimension $i=2$ part seems out of reach, at least using the arguments of the present paper (cf. Remarks \ref{hard0}, \ref{hard} and \ref{hard2} below).

(Added in revision: Conjecture \ref{conj3} also holds true in codimension larger than 2 for double EPW quartics; this is proven by Carl Mazzanti \cite[Theorem 7.8 and Corollary 7.9]{Maz}.)

\vskip0.5cm

\begin{convention} In this paper, the word {\sl variety\/} will mean a reduced irreducible scheme of finite type over $\C$. A {\sl subvariety\/} will refer to a (possibly reducible) reduced subscheme which is equidimensional. 

{\bf All Chow groups will be with rational coefficients}: we denote by $A^i(Y)$ the Chow group of codimension $i$ cycles on $Y$ with $\QQ$-coefficients.
The notation $A^i_{hom}(Y)$ will be used to denote the subgroup of homologically trivial cycles.
For a morphism $f\colon X\to Y$, we write $\Gamma_f\in A^\ast(X\times Y)$ for the graph of $f$.

\end{convention}

\vskip0.5cm 

\section{Hilbert squares}

\begin{notation} Let $X=S^{[2]}$ be the Hilbert scheme parametrizing length-2 0-dimensional subschemes of a projective K3 surface $S$. We will write 
  \[ A^\ast(X)= A^\ast_{(\ast)}(X)\]
  for the Shen--Vial decomposition of the Chow ring \cite[Part 2]{SV} (in loc. cit. this is written $\CH^\ast(X)_\ast$).
  It has been established in \cite[Part 2]{SV} that $ A^\ast_{(\ast)}(X)$ is a bigraded ring, under the intersection product (cf. also \cite{NOY} for another proof).
  It is expected that $A^i_{(0)}(X)$ injects into cohomology under the cycle class map; this is currently open for $i=2$.
\end{notation}

Here is the first result of this paper:

\begin{theorem}\label{main0} Let $X=S^{[2]}$ be the Hilbert square of a K3 surface $S$. Let
   \[ R^\ast(X):=\bigl\langle  A^1(X), c_j(T_X),  A^2_{(0)}(X), Z_k \bigr\rangle\ \ \subset\ A^\ast(X) \]
  be the graded $\QQ$-algebra generated by divisors, Chern classes, the subgroup $A^2_{(0)}(X)$ and constant cycle surfaces $Z_k\subset X$.
  Then $R^i(X)$ injects into cohomology via the cycle class map for $i\ge 3$.
  
  Moreover, $R^2(X)$ injects into cohomology provided $A^2_{(0)}(X)$ injects into cohomology.
  \end{theorem}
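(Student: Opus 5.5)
The plan is to show that every generator of $R^\ast(X)$ lies in $A^\ast_{(0)}(X)$. Since $A^\ast_{(\ast)}(X)$ is a bigraded ring by \cite[Part 2]{SV}, this gives $R^\ast(X)\subset A^\ast_{(0)}(X)$. It then remains to know that $A^i_{(0)}(X)$ injects into cohomology for $i\ge 3$, and for $i=2$ under the stated hypothesis.

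First, the generators. By Shen--Vial, $A^1(X)=A^1_{(0)}(X)$, and the Chern classes $c_j(T_X)$ lie in $A^j_{(0)}(X)$. The group $A^2_{(0)}(X)$ is included by definition. The new ingredient is the constant cycle surfaces $Z_k$, and I will argue that $[Z_k]\in A^2_{(0)}(X)$. Recall that $A^2(X)=A^2_{(0)}\oplus A^2_{(2)}$, where $A^2_{(2)}(X)$ is characterized in \cite{SV} as the image of $A^4(X)_{hom}$ under a correspondence. An equivalent characterization is that a class $\beta\in A^2(X)$ lies in $A^2_{(0)}$ iff $\beta\cdot\mathfrak{o}$-type conditions hold, e.g. iff its intersection with $A^2_{(2)}(X)$ vanishes in $A^4_{(4)}(X)$.

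I would use the following test. For $x\in X$, consider the Lagrangian-type surface $S_x$ ($=\{x+y\}$ type surfaces), or better the ``incidence'' correspondence defining the Fourier transform $\mathcal F$ of \cite{SV}. Then $A^2_{(2)}(X)=\mathcal F(A^4_{hom}(X))$ and $\mathcal F$ acts on $A^2$ by projecting according to grading. For $\beta=[Z]$, compute $\mathcal F([Z])$ through the decomposition of the Beauville--Bogomolov class $L\in A^2(X\times X)$. By \cite{SV}, $L$ has the form $\frac{1}{?}(\text{incidence})+$ terms built from $\mathfrak o$ and $l$. Restricting $L$ to $Z\times X$ and using that all points of $Z$ are rationally equivalent to a single point $z_0$, I get $L_\ast$-type expressions computed via the points $z_0$. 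Then $L^2|_{Z\times X}$ pushed to $X$ becomes a combination of $L^2_\ast(z_0)\cdot(\text{degree})$ and terms involving $l|_Z$. The point class $z_0$ satisfies $z_0=\mathfrak o_X$, the Beauville--Voisin class, by \cite{V17}: constant cycle surfaces of Hilbert squares are rationally equivalent to the canonical point. Consequently $[Z]$ is determined by expressions in $l$ and $\mathfrak o$, which lie in $A_{(0)}$. Concretely, I expect to prove that the $A^2_{(2)}$-component of $[Z]$ equals $\mathcal F$ of something proportional to $(z-\mathfrak o)=0$.

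Next, injectivity. $A^4_{(0)}(X)=\QQ\mathfrak o_X$ injects trivially. For $A^3_{(0)}(X)$, Shen--Vial show $A^3_{(0)}(X)=l\cdot A^1(X)$ (the image of divisors under $L$-multiplication), and this injects into cohomology by the Hilbert-scheme computations of \cite{SV}, or alternatively via the Lefschetz-type isomorphism $\cdot l\colon A^1\to A^3_{(0)}$ compatible with cohomology. In degree $2$, injectivity of $R^2\subset A^2_{(0)}$ follows immediately from the assumption.

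The main obstacle is the step $[Z_k]\in A^2_{(0)}(X)$. Identifying the point class of $Z$ with $\mathfrak o_X$ requires that $Z$ meet a known constant cycle locus or an ample divisor in the right way. I would try the following: intersect $Z$ with an ample divisor $D$, which gives a curve in $Z$. Then $D^2\cdot Z=\deg\cdot z_0$, while $D^2\cdot Z\in A^2_{(0)}\cdot A^2$. Using the multiplicativity together with $D^2\in A^2_{(0)}$ and $D^2\cdot A^2_{(2)}=0$ (this holds since $A^4_{(2)}(X)=0$ for $S^{[2]}$, as the grading on $A^4$ is $(0),(2),(4)$; I would need to check this vanishing), this forces $z_0\in A^4_{(0)}$, i.e. $z_0=\mathfrak o$. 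This is what I would try first.
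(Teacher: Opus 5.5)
Your reduction matches the paper's. Divisors, Chern classes and $A^2_{(0)}(X)$ lie in $A^\ast_{(0)}(X)$, so by multiplicativity of the Shen--Vial bigrading everything hinges on showing $[Z]\in A^2_{(0)}(X)$ for a constant cycle surface $Z$. \textbf{That step is not proved.}

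The Fourier-transform paragraph is only a sketch: the form of $L$ is left undetermined and the conclusion is merely ``expected''. The fallback in your last paragraph rests on a false statement, namely that $A^4_{(2)}(X)=0$ for $X=S^{[2]}$.
\begin{itemize}
\item In fact $A^4_{(2)}(X)$ is isomorphic to $A^2_{(2)}(X)$ via $\cdot S_{\mathfrak o}$ \cite[Lemma 15.4]{SV}. It contains, for instance, the classes $[\{x,\mathfrak o_S\}]-[\{y,\mathfrak o_S\}]$ with $x,y\in S$. These give a copy of $A^2_{hom}(S)$, which is huge since $p_g(S)=1$. So there is no reason for $D^2\cdot A^2_{(2)}(X)$ to vanish.
\item More fundamentally, even if it did vanish, intersecting $Z$ with $D^2$ would only give information about the point $z_0$. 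It would be blind to the component $Z_{(2)}$, which is exactly what you must kill.
\item Identifying $z_0$ is not the real issue anyway. All constant cycle surfaces in $X$ represent the same point, which spans $A^4_{(0)}(X)$. This comes from Voisin's argument in \cite[Proof of Lemma 3.10]{Vcoiso}, not from \cite{V17}.
\end{itemize}

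The missing idea is to intersect $Z$ with a class $\beta\in A^2_{(0)}(X)$ for which multiplication $\cdot\beta\colon A^2_{(2)}(X)\to A^4_{(2)}(X)$ is \emph{injective}. This is the opposite of the vanishing you proposed.
\begin{itemize}
\item The paper takes $\beta=S_{\mathfrak o}=p(q^{-1}(\mathfrak o_S))$, the surface of length-$2$ subschemes whose support contains $\mathfrak o_S$. Its class lies in $A^2_{(0)}(X)$, and $\cdot S_{\mathfrak o}\colon A^2_{(2)}\to A^4_{(2)}$ is an isomorphism \cite{SV}.
\item Since $Z\cdot S_{\mathfrak o}$ is represented by a zero-cycle supported on $Z$, it is a multiple of $z_0=\mathfrak o_X$. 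Hence it lies in $A^4_{(0)}(X)$.
\item Writing $Z=Z_{(0)}+Z_{(2)}$, the bigrading gives $Z\cdot S_{\mathfrak o}=Z_{(0)}\cdot S_{\mathfrak o}+Z_{(2)}\cdot S_{\mathfrak o}$, with the two terms in $A^4_{(0)}(X)$ and $A^4_{(2)}(X)$ respectively.
\item Therefore $Z_{(2)}\cdot S_{\mathfrak o}=0$, and injectivity gives $Z_{(2)}=0$.
\end{itemize}
With this in place, the rest of your plan goes through, since $A^3_{(0)}(X)$ and $A^4_{(0)}(X)$ are known to inject into cohomology.
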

  
   We actually prove the following more precise result (which is in agreement with the conjectures made by Voisin in \cite{Vcoiso}):

\begin{proposition}\label{CC0} Let $X=S^{[2]}$ be the Hilbert square of a K3 surface. Let $Z\subset X$ be a constant cycle surface. Then
  \[ Z\ \in\ A^2_{(0)}(X)\ .\]
\end{proposition}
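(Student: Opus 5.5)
The plan is to use the Shen--Vial decomposition $A^2(X)=A^2_{(0)}(X)\oplus A^2_{(2)}(X)$ and to kill the $(2)$-component of $Z$ by intersecting with suitable classes. Write $Z=Z_{(0)}+Z_{(2)}$.

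\emph{Step 1: the constant cycle property.} The surface $Z$ is a constant cycle subvariety, so every point $z\in Z$ has the same class $c_Z\in A^4(X)$. By Voisin's results on Hilbert squares of K3 surfaces \cite{V17} (and the related work on constant cycle subvarieties of Hilbert squares), one expects $c_Z=\mathfrak o_X$, the canonical Beauville--Voisin zero-cycle, i.e.\ $c_Z\in A^4_{(0)}(X)$. For the argument below it is enough to know that $c_Z$ is a multiple of $\mathfrak o_X$ modulo something that is controlled. Here I would use that $A^4_{(0)}(X)=\QQ\,\mathfrak o_X$ together with a degeneration argument: a constant cycle surface has $c_Z$ in the image of $A^2_{(0)}\cdot A^2_{(0)}$.

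\emph{Step 2: detecting $Z_{(2)}$.} By the Shen--Vial theory, $A^2_{(2)}(X)$ is the image of $A^0(S)\otimes A^2_{hom}(S)$ under a correspondence. Equivalently, it is detected by the action of the correspondence $L$ (the class lifting the Beauville--Bogomolov form) or by the Fourier transform $\mathcal F$: on $A^2_{(2)}(X)$, $\mathcal F$ is an isomorphism onto $A^2_{(2)}(X)$ up to a nonzero scalar, while the graded pieces are eigenspaces of $\mathcal F$. So it suffices to show that $\mathcal F(Z)_{(2)}=0$. Expanding $\mathcal F(Z)=(p_2)_\ast(e^{L}\cdot p_1^\ast Z)$, the relevant term is $(p_2)_\ast(L^2\cdot p_1^\ast Z)$.

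\emph{Step 3: the key step.} We have $L^2|_{Z\times X}$, and $L$ restricted to $\{z\}\times X$ is $\ell_z$, a divisor-type class depending on the point $z$. Using Shen--Vial's formula $L^2=\tfrac{2}{25}\,\text{(terms)}+\ldots$ and the fact that $L_\ast(\text{point})$ relates to $c_z$, the constant cycle hypothesis forces $z\mapsto L^2_\ast(z)$ to be constant along $Z$. I would then use Voisin's alternative characterization that $A^2_{(2)}$ is spanned by the surfaces $S_x=\{\xi\ni x\}$ with $x\in S$, modulo $A^2_{(0)}$, and pair $Z$ against $A^2_{(2)}$. Concretely, I would show $Z\cdot S_x$ is independent of $x$ by exhibiting it as $(p_2)_\ast$ of a cycle supported on $Z\times S$ whose fibres over $Z$ are rationally equivalent, which uses exactly the constant cycle property. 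Then the multiplicativity of the bigrading gives $Z_{(2)}\cdot A^2_{(2)}=0$ in $A^4_{(4)}$. Nondegeneracy of the pairing $A^2_{(2)}\times A^2_{(2)}\to A^4_{(4)}$, which Shen--Vial prove via $A^2_{(2)}\cong A^2_{hom}(S)$ and $A^4_{(4)}\cong \sym^2 A^2_{hom}(S)$, then yields $Z_{(2)}=0$.

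I expect the main obstacle to be Step 3. The difficulty is converting the pointwise constancy of $Z$ into the vanishing of the intersection with $A^2_{(2)}$. The approach I would try first is to write the incidence correspondence and use Bloch--Srinivas decomposition on $Z$: the constant cycle property means $\Delta_Z$ pushed into $X$ decomposes as $Z\times c_Z$ plus a cycle supported on $D\times X$ with $D\subsetneq Z$. Iterating, this should show that the action of $Z$ as a correspondence on $A^4_{hom}$ vanishes, which is the needed input.
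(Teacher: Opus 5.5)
\textbf{Gap: the pairing you rely on is not known to be nondegenerate.}

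The problem is at the end of Step 3. What you establish there is correct. $Z\cdot S_x$ is represented by a zero-cycle supported on $Z$, so it equals $\deg(Z\cdot S_x)\,c_Z$. Hence $Z\cdot(S_x-S_{\mathfrak o})=0$ for all $x\in S$, and the bigrading gives $Z_{(2)}\cdot\gamma=0$ in $A^4_{(4)}(X)$ for all $\gamma\in A^2_{(2)}(X)$.

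To conclude $Z_{(2)}=0$, you then need the pairing $A^2_{(2)}(X)\times A^2_{(2)}(X)\to A^4_{(4)}(X)$ to be nondegenerate. To my knowledge this is not proved in \cite{SV}. Moreover, $A^4_{(4)}(X)$ is not isomorphic to $\sym^2 A^2_{hom}(S)$; it is only the image of the natural surjective map from it, and no injectivity is known. Under $A^2_{(2)}(X)\cong A^2_{hom}(S)$, nondegeneracy would say: for every nonzero $\alpha\in A^2_{hom}(S)$ there is $\beta$ with $\alpha\times\beta+\beta\times\alpha\neq 0$ in $A_0(S\times S)$. That is a statement about products of homologically trivial zero-cycles which no available technique reaches. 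Bloch--Srinivas type arguments, as in your last paragraph, only see cohomological actions of correspondences, and here those vanish because $\alpha$ is homologically trivial.

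By passing to the differences $S_x-S_{\mathfrak o}$, you move all the information into the one graded piece where nothing can be concluded. Step 2 should also be dropped: the Fourier transform maps $A^2_{(2)}(X)$ to $A^4_{(2)}(X)$, not to itself.

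\textbf{How to repair it.} Keep the surface you subtracted.
\begin{enumerate}
\item The class of $S_{\mathfrak o}$ lies in $A^2_{(0)}(X)$ \cite[Proof of Lemma 15.5]{SV}. So $Z\cdot S_{\mathfrak o}=Z_{(0)}\cdot S_{\mathfrak o}+Z_{(2)}\cdot S_{\mathfrak o}$, with the first term in $A^4_{(0)}(X)\cong\QQ[h^4]$ and the second in $A^4_{(2)}(X)$.
\item Once you know $c_Z\in\QQ[h^4]$, this forces $Z_{(2)}\cdot S_{\mathfrak o}=0$.
\item The map $\cdot S_{\mathfrak o}\colon A^2_{(2)}(X)\to A^4_{(2)}(X)$ is an isomorphism \cite[Lemma 15.4]{SV}. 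Injectivity is elementary here: it amounts to $\alpha\mapsto\mathfrak o_S\times\alpha+\alpha\times\mathfrak o_S$, and pushing forward to one factor recovers $\alpha$. Hence $Z_{(2)}=0$.
\end{enumerate}
This is exactly the paper's proof.

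Unlike your route, it genuinely needs $c_Z\in\QQ[h^4]$. Your Step 1 only asserts this (``one expects'', plus an unspecified degeneration argument). The paper obtains it from Voisin's argument \cite[Proof of Lemma 3.10]{Vcoiso} that all constant cycle surfaces in $X$ represent the same point, which is therefore the canonical class.
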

  
  This almost immediately implies the theorem: indeed, the proposition, combined with the fact that $A^\ast_{(\ast)}(X)$ is a bigraded ring,
   guarantees that there is an inclusion
  \[ R^3(X)\subset A^3_{(0)}(X)\ .\] 
  But it is known that $A^3_{(0)}(X)$ injects into cohomology via the cycle class map (this is stated in \cite[Introduction]{V6}; it follows from the fact that the projector defining $A^3_{(0)}(X)$ factors
  through a surface and that Murre's Conjecture D is known for surfaces, cf. for instance \cite[Theorem 4.15]{Via}
  or \cite[Lemma 2.20]{23.6}). This settles the $i=3$ part of the theorem. As for the $i=4$ part of the theorem, 
  again by multiplicativity of the bigrading one has 
    \[  A^2_{(0)}(X)\cdot  A^2_{(0)}(X)\ \subset A^4_{(0)}(X)\cong \QQ[c_4(X)]\ \] 
    (Here the last isomorphism follows from \cite[Lemma 13.7(iv)]{SV} combined with the fact that $c_2(X)\in A^2_{(0)}(S)$ which follows from Beauville--Voisin \cite{BV}).
    
   Moreover, it is known that all constant cycle surfaces $Z_k$ represent the same point in $X$ (cf. \cite[Proof of Lemma 3.10]{Vcoiso}), and so for $i=4$ the theorem reduces to the usual Beauville--Voisin conjecture for $X$, which has already been proven in \cite{V17}.

  \begin{proof}(of Proposition \ref{CC0}) Given a constant cycle surface $Z\subset X$, we may decompose its class in $A^2(X)$ with respect to the Shen--Vial decomposition:
  \[ Z=Z_{(0)} + Z_{(2)}\ \ \hbox{in}\ A^2(X)\ ,\]
  where $Z_{(i)}\in A^2_{(i)}(X)$. 
  
  As in \cite[Part 2]{SV}, let $I\subset X\times S$ be the incidence correspondence with projections
    \[ \begin{matrix}  I & \xrightarrow{q} & S\\
         \ \ \   \downarrow{\scriptstyle p} &&\\
           X&&\\
           \end{matrix}\]
    The surface $ S_{\mathfrak o}\subset X$ is defined as 
    \[ S_{\mathfrak o}:= p(q^{-1}{\mathfrak o_S})    \ ,\]
    where ${\mathfrak o}_S   \in S$ is a point representing the distinguished zero-cycle.  
  
 The surface $ S_{\mathfrak o}$ has its class in $A^2_{(0)}(X)$ \cite[Proof of Lemma 15.5]{SV}.
    Using that $A^\ast_{(\ast)}(X)$ is a bigraded ring (and that $A^4_{(0)}(X)\cong \QQ[h^4]$), we can thus write the intersection as
    \[  Z\cdot S_{\mathfrak o} =  Z_{(0)}\cdot S_{\mathfrak o} + Z_{(2)}\cdot S_{\mathfrak o}  
                                               = \alpha h^4 +   Z_{(2)}\cdot S_{\mathfrak o}\ \ \ \hbox{in}\ A^4(X)\ ,
                                               \]
    where $\alpha\in\QQ$ and $Z_{(2)}\cdot S_{\mathfrak o}\in A^4_{(2)}(X)$.
    
    On the other hand, since $Z$ is a constant cycle surface, all points of $Z$ represent the same class in $A^4(X)$, and this class is the same for any constant cycle surface (as mentioned above, this follows from \cite[Proof of Lemma 3.10]{Vcoiso}), which implies
      \[ Z\cdot S_{\mathfrak o} \ \in \QQ[h^4]\ .\]
      It follows that
      \[    Z_{(2)}\cdot S_{\mathfrak o}=0\ \ \hbox{in}\ A^4(X)   \ .\]
 But the map
 \[ \cdot S_{\mathfrak o}\colon\ \ A^2_{(2)}(X)\ \to\      A^4_{(2)}(X)\]
 is known to be an isomorphism \cite[Lemma 15.4]{SV}, and so
   \[ Z_{(2)}=0\ \ \hbox{in}\ A^2(X)\ ,\]
   proving the proposition.
                              \end{proof}

\begin{remark}\label{hard0} The $i=2$ case of Theorem \ref{main0} remains highly challenging. Indeed, the expectation that $A^2_{(0)}(X)$ should inject into cohomology is related to Murre's conjecture D \cite{M} for $S\times S$. Murre's conjecture D is known for products of a surface and a curve \cite{M}, but I am not aware of any results concerning products of surfaces with $p_g\not=0$. 

When $S$ is a Kummer K3 surface, the injectivity of $A^2_{(0)}(X)$ into cohomology would follow from the injectivity of $A^2_{(0)}(A\times A)$ into cohomology, where $A$ is an abelian surface and $A^\ast_{(\ast)}()$ refers to Beauville's splitting for the Chow ring of abelian varieties \cite{B1}. I do not think there are any results concerning the injectivity of $A^2_{(0)}(B)\to H^\ast(B,\QQ)$ for abelian varieties $B$ of dimension larger than 3.
\end{remark}

\section{Fano varieties of lines}

\begin{notation} Let $X=F(Y)$ be the Fano variety of lines of a smooth cubic fourfold $Y\subset\PP^5$. Then $X$ is a hyper-K\"ahler variety of $K3^{[2]}$-type, and these
$X$ form a 20-dimensional locally complete family \cite{BD}.

We will write 
  \[ A^\ast(X)= A^\ast_{(\ast)}(X)\]
  for the Shen--Vial decomposition of the Chow ring \cite[Part 3]{SV} (in loc. cit. this is written $\CH^\ast(X)_\ast$).
  
  It is expected (but not currently known) that $ A^\ast_{(\ast)}(X)$ is a bigraded ring, under the intersection product.
  It is also expected that $A^i_{(0)}(X)$ injects into cohomology under the cycle class map; this is currently open for $i=2$.
\end{notation}

The second result of this paper is the following:

\begin{theorem}\label{main1} Let $X=F(Y)$ be the Fano variety of lines of a smooth cubic fourfold $Y\subset\PP^5$.
Let
  \[ R^\ast(X):=\bigl\langle  A^1(X), c_j(T_X),  S_k \bigr\rangle\ \ \subset\ A^\ast(X) \]
  be the graded $\QQ$-algebra generated by divisors, Chern classes, and constant cycle surfaces $S_k\subset X$.
  Then $R^i(X)$ injects into cohomology via the cycle class map for $i\ge 3$.
  
  Moreover, $R^2(X)$ injects into cohomology provided $A^2_{(0)}(X)$ injects into cohomology.
  \end{theorem}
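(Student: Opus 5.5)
The plan is to mimic the Hilbert square case, but without multiplicativity of the Shen--Vial bigrading, which is not known for $F(Y)$. First I will show the analogue of Proposition \ref{CC0}: every constant cycle surface $S_k\subset X$ has class in $A^2_{(0)}(X)$. Decompose $S_k=S_{k,(0)}+S_{k,(2)}$ in $A^2(X)$. By \cite[Part 3]{SV} there is a distinguished class whose intersection with $A^2_{(2)}(X)$ is injective. Natural candidates are the constant cycle surface $\Sigma_2$ of lines of the second type, or $h^2$, or $c_2(X)$; for $F(Y)$, Shen--Vial prove $\cdot h^2\colon A^2_{(2)}(X)\to A^4_{(2)}(X)$ is an isomorphism. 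They also show that the products actually needed, namely $A^1\cdot A^1$, $A^1\cdot A^2_{(0)}$, $h^2\cdot A^2_{(0)}$ and $c_2\cdot A^2_{(0)}$, land in the correct graded pieces, and that $A^4_{(0)}(X)=\QQ\,\mathfrak o_X$. Since $S_k$ is a constant cycle surface, the zero-cycle $S_k\cdot h^2$ is supported on $S_k$, hence is a multiple of a single point. By \cite[Proof of Lemma 3.10]{Vcoiso} (or via the known triangle/$\mathfrak o_X$ results for $F(Y)$), this point is the canonical class $\mathfrak o_X$ in $A^4_{(0)}(X)$. Then $S_{k,(2)}\cdot h^2=S_k\cdot h^2 - S_{k,(0)}\cdot h^2\in A^4_{(0)}\cap A^4_{(2)}=0$. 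Injectivity of $\cdot h^2$ on $A^2_{(2)}$ gives $S_{k,(2)}=0$.

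Next I will check that all of $R^\ast(X)$ lies in $A^\ast_{(0)}(X)$, even though full multiplicativity is unavailable. It is known that $A^1(X)=A^1_{(0)}(X)$ and $c_2(X)\in A^2_{(0)}(X)$. In codimension $3$, $R^3(X)$ is spanned by products $D\cdot\gamma$ with $D$ a divisor and $\gamma$ one of $D'D''$, $c_2$ or $S_k$. So I need $A^1_{(0)}\cdot A^2_{(0)}\subset A^3_{(0)}$. Shen--Vial prove that $A^2_{(0)}(X)$ is spanned by $h^2$, $c_2$ and the intersections of divisors. If the $S_k$ fall outside that span, I will instead use the partial multiplicativity result of \cite[Part 3]{SV}: intersecting with a divisor preserves the grading, i.e. $A^1_{(0)}\cdot A^i_{(s)}\subset A^{i+1}_{(s)}$. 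I expect this to be stated there, via the description through the incidence correspondence with $Y$. Then $R^3(X)\subset A^3_{(0)}(X)$, and $A^3_{(0)}(X)$ injects into cohomology because its projector factors through a surface (Murre D for surfaces), as in the Hilbert square case.

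For $i=4$, I only need that every degree-$4$ monomial is a multiple of $\mathfrak o_X$. There are three cases:
\begin{enumerate}[(i)]
\item Products of divisors and Chern classes: these are handled by Voisin's result \cite{V17}.
\item Products involving some $S_k$ times a codimension-$2$ element of $R$: these are zero-cycles supported on $S_k$, hence multiples of the constant point $\mathfrak o_X$.
\item The product $S_k\cdot S_l$: this is likewise supported on $S_k$, so it is proportional to $\mathfrak o_X$.
\end{enumerate}
Since $A^4_{(0)}=\QQ\mathfrak o_X$ injects into cohomology, this settles $i=4$. For $i=2$, $R^2(X)\subset A^2_{(0)}(X)$ by the first step, so the final claim follows at once.

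The main obstacle is the first step and the partial multiplicativity in codimension $3$. I must locate in \cite{SV} precisely that $\cdot h^2$ is injective on $A^2_{(2)}$ and that divisor multiplication respects the grading. If the divisor compatibility is only known for $h$, I would first try to reduce to the case $\rho(X)=1$ by a spreading/specialization argument, or use that on $F(Y)$ divisors of $A^1$ come from $A^1_{(0)}$ and that $\mathfrak o_X$ is a constant cycle class.
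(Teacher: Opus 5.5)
Your proposal is correct and is essentially the paper's argument. You show every constant cycle surface lies in $A^2_{(0)}(X)$ by intersecting with $h^2$, using that all its points equal $\mathfrak{o}_X$ and that $\cdot h^2\colon A^2_{(2)}(X)\to A^4_{(2)}(X)$ is an isomorphism; you then get $R^3(X)\subset A^3_{(0)}(X)$ from $A^1(X)\cdot A^2_{(0)}(X)\subset A^3_{(0)}(X)$ and reduce $i=4$ to \cite{V17}. Two corrections: the partial multiplicativity you need (including $h^2\cdot A^2_{(0)}(X)\subset A^4_{(0)}(X)$) comes from \cite[Proposition A.7]{FLV} together with \cite[Proposition 22.3]{SV}, not from \cite{SV} alone; and your claim that Shen--Vial show $A^2_{(0)}(X)$ is spanned by $h^2$, $c_2$ and products of divisors is not known (it would give injectivity of $A^2_{(0)}(X)$ into cohomology, which is open), so drop that shortcut.
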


 We actually prove the following more precise result (which is in agreement with the conjectures made by Voisin in \cite{Vcoiso}):

\begin{proposition}\label{CC} Let $X=F(Y)$ be the Fano variety of lines of a smooth cubic fourfold $Y\subset\PP^5$. Let $S\subset X$ be a constant cycle surface. Then
  \[ S\ \in\ A^2_{(0)}(X)\ .\]
\end{proposition}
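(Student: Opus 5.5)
The plan is to mimic the proof of Proposition \ref{CC0}, replacing the Hilbert-square ingredients by their analogues for $F(Y)$ from \cite[Part 3]{SV}. The complication is that for $X=F(Y)$ the bigrading is not known to be multiplicative, so each intersection used must be controlled by a specific result of Shen--Vial rather than by a general principle.

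First decompose the class of the constant cycle surface $S$ with respect to the Shen--Vial decomposition:
\[ S=S_{(0)}+S_{(2)}\ \ \hbox{in}\ A^2(X)\ .\]
The role of $S_{\mathfrak o}$ is played by a natural codimension $2$ class in $A^2_{(0)}(X)$ for which multiplication is well behaved. I would take $c:=c_2(\mathcal E)$ or $g^2$, where $\mathcal E$ is the tautological rank $2$ bundle and $g$ is the Plücker polarization; the latter is the class of the surface of lines meeting a codimension-$2$ linear section. Shen--Vial show the following for such classes:
\begin{itemize}
\item[(a)] $A^4_{(0)}(X)\cong\QQ[\mathfrak o_X]$ is spanned by $g^4$, or equivalently by $c_4(X)$.
\item[(b)] Intersection with $c$ sends $A^2_{(0)}(X)$ into $A^4_{(0)}(X)$ and $A^2_{(2)}(X)$ into $A^4_{(2)}(X)=A^4_{hom}(X)$.
\item[(c)] The map $\cdot c\colon A^2_{(2)}(X)\to A^4_{(2)}(X)$ is injective.
\end{itemize}
I recall that $A^2_{(2)}(X)=A^2_{hom}(X)$ and that intersecting with $g^2$, or with $c$, gives an isomorphism onto $A^4_{hom}(X)$, induced by the incidence correspondence $I$. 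This is \cite[Part 3]{SV}, in the statements around the Fourier transform, $\mathcal F$ restricted to $A^2_{(2)}$ and $A^4_{(2)}$.

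Next, use the constant cycle property. All points of $S$ are rationally equivalent to a fixed point $x_S$. Since $S\cdot c$ is a zero-cycle supported on $S$, we get $S\cdot c=\deg(S\cdot c)\,x_S$. Voisin's argument \cite[Proof of Lemma 3.10]{Vcoiso}, also used above, shows that $x_S$ is the distinguished class $\mathfrak o_X$: points of a Lagrangian constant cycle surface lie in the Beauville--Voisin class. This uses that $X$ contains constant cycle surfaces, for instance the surface of lines in a hyperplane section with a triple line, and that $\mathfrak o_X$ is the class of points on such surfaces \cite{V17}. Hence $S\cdot c\in\QQ[\mathfrak o_X]=A^4_{(0)}(X)$.

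By (b), $S_{(0)}\cdot c\in A^4_{(0)}(X)$, so $S_{(2)}\cdot c\in A^4_{(0)}\cap A^4_{(2)}=0$. Then (c) gives $S_{(2)}=0$.

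The main obstacle is step (b)–(c): making sure the needed multiplicativity, specifically that $A^2_{(0)}\cdot c\subset A^4_{(0)}$ and $A^2_{(2)}\cdot c\subset A^4_{(2)}$, holds for some explicit $c$ without knowing that the full bigrading is multiplicative. I would first try $c=g^2$. Shen--Vial prove that $A^\ast_{(0)}$ is stable under products with $g$ and $c_2(\mathcal E)$, and that $g^2\cdot$ restricted to $A^2_{(2)}(X)=A^2_{hom}(X)$ is an isomorphism onto $A^4_{hom}(X)$, via the Fourier transform relation $\mathcal F\circ(\cdot g^2)$. If the first inclusion fails for arbitrary elements of $A^2_{(0)}$, I would instead use $S_{(0)}\cdot g^2=\deg\cdot\mathfrak o_X$ by computing it via Fourier: $\mathcal F(A^2_{(0)})\subset A^2_{(0)}$ and $\mathcal F(A^4_{(0)})=\QQ g^0$.
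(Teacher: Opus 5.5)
Your argument is essentially the paper's proof: you decompose $S=S_{(0)}+S_{(2)}$ and intersect with $g^2=h^2\in A^2_{(0)}(X)$, use $A^2_{(0)}(X)\cdot A^1(X)\cdot A^1(X)\subset A^4_{(0)}(X)=\QQ[h^4]$ (which the paper takes from \cite[Proposition A.7]{FLV} rather than from Shen--Vial) together with the constant cycle property \cite[Proposition 4.5]{Vcoiso}, and conclude by injectivity of $\cdot h^2\colon A^2_{(2)}(X)\to A^4_{(2)}(X)$ \cite[Theorems 2.2 and 2.4]{SV}. You should drop three side remarks, none of which your argument uses: the claim $A^2_{(2)}=A^2_{hom}$ is equivalent to the open injectivity of $A^2_{(0)}$ into cohomology; the claim $A^4_{(2)}=A^4_{hom}$ is false, since $A^4_{hom}=A^4_{(2)}\oplus A^4_{(4)}$ with $A^4_{(4)}\neq 0$; and the lines meeting a codimension-$2$ linear section form the divisor $g$, not the surface class $g^2$.
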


This proposition readily implies the theorem: indeed, the proposition, combined with the fact that 
  \[  A^1(X)\cdot A^2_{(0)}(X)\ \subset \ A^3_{(0)}(X)\  \]
  \cite[Proposition A.7]{FLV}, guarantees that there is an inclusion
  \[ R^3(X)\subset A^3_{(0)}(X)\ .\] 
  But it is known that $A^3_{(0)}(X)$ injects into cohomology via the cycle class map (this is because the projector defining $A^3_{(0)}(X)$ factors over a surface and Murre's conjecture D is known for surfaces, cf. \cite[Theorem 4.15]{Via} or the similar argument in
  \cite[Lemma 2.6]{BL}). This settles the $i=3$ part of the theorem. As for the $i=4$ part of the theorem, it is known that all constant cycle surfaces $S_k$ represent the same point in $X$ \cite[Proposition 4.5]{Vcoiso}, and so for $i=4$ the theorem reduces to the usual Beauville--Voisin conjecture for $X$, which has already been proven in \cite{V17}.

\begin{proof}(of Proposition \ref{CC}) 
This is very similar to the proof of Proposition \ref{CC0}. Let us write
  \[ S=S_{0} + S_{2}\ \ \hbox{in}\ A^2(X)\ ,\]
  where $S_{i}\in A^2_{(i)}(X)$.
  We now consider the intersection of the surface $S$ with $h^2$, where $h\in A^1(X)$ denotes the polarization. It is known that $h^2\in A^2(X)$ lies in 
  $A^2_{(0)}(X)$ \cite[Theorem 4.6]{SV}. It is also known that
    \[ \begin{split}  A^2_{(0)}(X)\cdot A^1(X)\cdot A^1(X)\ &\subset\  A^4_{(0)}(X)\ ,\\
    A^2_{(0)}(X)\cdot A^2_{(2)}(X)\ &\subset\ A^4_{(2)}(X)\  \\
    \end{split}\]
    (this follows from \cite[Proposition A.7]{FLV} and \cite[Proposition 22.3]{SV}), and that $A^4_{(0)}\cong\QQ[h^4]$. It follows that we have
    \[  S\cdot h^2 = S_0\cdot h^2 + S_2\cdot h^2 = \alpha h^4 + S_2\cdot h^2\ \ \ \hbox{in}\ A^4_{(0)}(X)\oplus A^4_{(2)}(X)\ ,\]
    where $\alpha\in\QQ$.
    
    As in the proof of Proposition \ref{CC0}, the constant cycle condition now implies that 
      \[ S_2\cdot h^2 =0\ \ \ \hbox{in}\  A^4(X)\ .\]
      Since intersecting with $h^2$ induces an isomorphism
       \[ \cdot h^2\colon\ \ A^2_{(2)}(X)\ \xrightarrow{\cong}\ A^4_{(2)}(X)\ \]
       \cite[Theorems 2.2 and 2.4]{SV}, it follows that $S_2=0$, closing the proof.

    \end{proof}

\begin{remark}
It is possible to give an alternative proof of Proposition \ref{CC}, avoiding the reference to \cite[Proposition A.7]{FLV} but instead using the quadratic relation \cite[Proposition 21.2]{SV}. This alternative proof is similar to the proof of Theorem \ref{main2} below.
\end{remark}

\begin{remark}\label{hard} The $i=2$ case of Conjecture \ref{conj3} for Fano varieties $X$ of lines in cubic fourfolds remains out of reach. Indeed, proving that $A^2_{(0)}(X)$ injects into cohomology seems unfeasible (I do not know of a single cubic fourfold for which this can be done). At least when the cubic does not contain planes, it is known there is an eigenspace decomposition
  \[  A^2(X)= V_{31}\oplus V_{-14}\oplus V_{4}\oplus V_{-2} \ ,\]
  where $V_\tau$ denotes the eigenspace with eigenvalue $\tau$ for the action of $\phi$ on $A^2(X)$
  \cite[Theorem 21.9]{SV}. To settle the $i=2$ case of Conjecture \ref{conj3}, in view of \cite[Lemma 21.12]{SV} it would suffice to show that for any constant cycle surface $S\subset X$, the class $S\in A^2(X)$ avoids $V_4$; unfortunately I do not see any reason why this should be the case.
\end{remark}

\section{Double EPW sextics}

\begin{notation} Double EPW sextics have been constructed by O'Grady \cite{OG2}, \cite{OG4}, \cite{OG5} as double covers $X\to Z$ of certain sextic hypersurfaces $Z\subset\PP^5$.
They form a locally complete family of hyper-K\"ahler varieties of $K3^{[2]}$-type. By construction, they come with a covering involution $\iota$ which is anti-symplectic.
We will write $h\in A^1(X)$ for the polarization, and $c:=c_2(T_X)\in A^2(X)$ for the second Chern class of the tangent bundle.
\end{notation}

Here is the third result of this paper:

\begin{theorem}\label{main2} Let $X$ be a double EPW sextic with covering involution $\iota$. Let
  \[ R^\ast(X):=\bigl\langle  A^1(X), c_j(T_X), A^2(X)^+, S_k \bigr\rangle\ \ \subset\ A^\ast(X) \]
  be the graded $\QQ$-algebra generated by divisors, Chern classes, $\iota$-invariant codimension 2 classes and constant cycle surfaces $S_k\subset X$.
  Then $R^i(X)$ injects into cohomology via the cycle class map for $i\ge 3$.
  
  Moreover, $R^2(X)$ injects into cohomology if and only if $A^2(X)^+$ injects into cohomology.
  \end{theorem}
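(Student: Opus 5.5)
The approach mirrors the two previous sections: first establish an analogue of Propositions \ref{CC0} and \ref{CC}, namely that every constant cycle surface $S\subset X$ satisfies $S\in A^2(X)^+$ (more precisely, that $S$ lies in the invariant part $A^2_{(0)}(X)$). The inputs are known facts about double EPW sextics from \cite{L} and \cite{LV}: the anti-symplectic involution $\iota$ acts as $+1$ on $A^4_{(0)}(X)\cong\QQ[h^4]$ and as $-1$ on $A^4_{hom}(X)$; $A^2_{hom}(X)$ is $\iota$-anti-invariant, or at least its anti-invariant part is detected by intersecting with $h^2$; and $A^1(X)\cdot A^2(X)^+\subset A^3(X)^+$ with $A^3(X)^+$ injecting into cohomology. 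The last statement holds because, by \cite{LV} and \cite{L}, the projector onto the invariant part of $A^3$ factors through a surface, so Murre's conjecture D for surfaces applies.

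For the key proposition, write $S=S^+ + S^-$ with $S^\pm=\tfrac12(S\pm\iota^\ast S)$. The two parts behave as follows.
\begin{itemize}
\item Since $\iota$ is anti-symplectic, the homologically trivial part of $A^2$ should be concentrated in $A^2(X)^-$ modulo the invariant classes. The cohomology class of $S^-$ is anti-invariant and lies in $H^4(X)^-$; the aim is to show that $S^-=0$.
\item Intersect with $h^2$, which is $\iota$-invariant. Then $S\cdot h^2=S^+\cdot h^2+S^-\cdot h^2$, and $S^-\cdot h^2\in A^4(X)^-=A^4_{hom}(X)^-$.
\item By the constant cycle property and \cite[Proposition 4.5]{Vcoiso}, or the analogous argument there, $S\cdot h^2$ is a multiple of the distinguished class $\mathfrak o_X$. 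This class is $\iota$-invariant and lies in $\QQ[h^4]$.
\item Similarly, $S^+\cdot h^2\in A^4(X)^+=\QQ[h^4]$.
\item Hence $S^-\cdot h^2=0$.
\end{itemize}
One then needs that $\cdot h^2\colon A^2(X)^-\to A^4(X)$ is injective. I expect this to be the main obstacle. I would try the quadratic relation of the kind in \cite[Proposition 21.2]{SV}, adapted in \cite{L} to EPW sextics: the MRC/Beauville–Voisin type relation for $\Delta_X$ expresses $\Delta_X^2$ (or $\iota$ via the graph) in terms of $h$ and $c$. It should give an identity $\beta\cdot h^2=\lambda\beta+(\text{invariant terms})$ on $A^2(X)^-$ after applying $\iota$, yielding injectivity. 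Alternatively, one could use that $S^-$ is cohomologically anti-invariant and try to show directly that $S$ is cohomologically in $H^4(X)^+$ by a Lagrangian argument. That alone would only control the cohomology class, so the Chow-level injectivity remains the crux.

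With the proposition in hand, the theorem follows as before. Every generator of $R^\ast(X)$ lies in the invariant subring: divisors are $\iota$-invariant since $\rho$-classes are generated by $h$ up to homological equivalence, with the remaining divisors handled via \cite{L}, and Chern classes are invariant. So $R^3(X)\subset A^3(X)^+$, which injects into cohomology. For $i=4$, all constant cycle surfaces represent the same point, so the statement reduces to Conjecture \ref{conj2} for $X$, which is known \cite{L}, together with $A^4(X)^+=\QQ[h^4]$. For $i=2$, $R^2(X)=A^2(X)^+$ up to decomposable classes lying in $A^2(X)^+$, so $R^2(X)$ injects if and only if $A^2(X)^+$ does.
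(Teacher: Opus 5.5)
There is a genuine gap, at the step you flagged, and it cannot be closed as posed because both your target and the injectivity you need are false in general.

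\textbf{The key proposition.} Divisors on $X$ need not be $\iota$-invariant. One has $A^1(X)^+=\QQ h$ because $H^2(X,\QQ)^+=\QQ[h]$, but $A^1(X)^-\cong \mathrm{NS}(X)^-_{\QQ}$ is nonzero as soon as the Picard number is at least $2$. Take $0\neq D\in A^1(X)^-$. Then $h\cdot D\in A^2(X)^-$ is nonzero already in $H^4(X,\QQ)=\sym^2H^2(X,\QQ)$. On the other hand, $(h\cdot D)\cdot h^2=h^3\cdot D$ lies in $P^4(X)\cap A^4(X)^-=\QQ[h^4]\cap A^4(X)^-=0$. In fact $\cdot h^2$ kills all of $H^4(X,\QQ)^-$, since $H^8$ is $\iota$-invariant. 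So $\cdot h^2\colon A^2(X)^-\to A^4(X)$ is not injective.

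Moreover, nothing forces the anti-invariant part of $[S]$ to vanish; it lies in $H^4(X,\QQ)^-=h\cdot H^2(X,\QQ)^-$. The paper accordingly proves only $S^-=h\cdot D$ with $D\in A^1(X)^-$.

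The missing idea is to remove this cohomological part first:
\begin{enumerate}
\item Use $H^4(X,\QQ)^-=h\cdot H^2(X,\QQ)^-$ and Lefschetz $(1,1)$ to choose $D\in A^1(X)^-$ with $\gamma:=S^--h\cdot D\in A^2_{hom}(X)^-$.
\item Kill $\gamma$ with the relation $\Delta_X^-=B+\mathcal J_1+\dots+\mathcal J_r$ of Proposition \ref{deltaminus}. The term $B$, and any term involving $I\cdot(p_2)^\ast(a)$ or $I\cdot(p_1)^\ast(D)\cdot(p_2)^\ast(D')$, act as zero on $\gamma$, because $B$ is decomposable and $A^0_{hom}(X)=A^1_{hom}(X)=0$.
\item The remaining terms factor through $I_\ast(\gamma\cdot a)$ with $a\in\{h^2,c\}$. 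Here $\gamma\cdot a$ is a degree-zero cycle in $P^4(X)=\QQ[h^4]$, hence zero, because $S\cdot a$ (constant cycle property), $S^+\cdot a$ and $h\cdot D\cdot a$ all lie in $P^4(X)$.
\end{enumerate}
This uses the constant cycle property against $c$ as well as $h^2$. It also uses $P^4(X)=\QQ[h^4]$ from \cite{L,LV}, not $A^4(X)^+=\QQ[h^4]$. The latter is false: $\iota^\ast$ fixes the $(4,0)$-form $\sigma^2$, so a Bloch--Srinivas argument shows $A^4(X)^+\neq\QQ[h^4]$. For the same reason $\iota$ does not act as $-1$ on all of $A^4_{hom}(X)$. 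Your proposed identity ``$\beta\cdot h^2=\lambda\beta+\cdots$'' mixes codimensions $4$ and $2$, so it cannot play the role of Proposition \ref{deltaminus}.

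\textbf{The deduction of the theorem.} This breaks for the same reason. Since $A^1(X)^-$ can be nonzero, $R^3(X)\not\subset A^3(X)^+$ in general: for instance $h^2\cdot D$ with $0\neq D\in A^1(X)^-$ is anti-invariant and nonzero in $H^6$ by hard Lefschetz. Your argument that $A^3(X)^+$ injects, via a projector factoring through a surface, presupposes a Shen--Vial type decomposition, which is not available for double EPW sextics. Likewise $R^2(X)\not\subset A^2(X)^+$, since it contains $h\cdot A^1(X)^-$.

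The paper avoids all this. It shows $R^\ast(X)=P^\ast(X):=\langle A^1(X),c_j(T_X),A^2(X)^+\rangle$ and then quotes \cite[Theorem 3.1]{L} for the injectivity of $P^i(X)$ when $i\ge 3$. The ``if and only if'' for $i=2$ follows because $P^2(X)=A^2(X)^+\oplus h\cdot A^1(X)^-$, and the second summand injects into $H^4(X,\QQ)^-$.

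\textbf{The case $i=4$.} You cite \cite[Proposition 4.5]{Vcoiso}, which concerns Fano varieties of lines. Here one needs a reference constant cycle surface, namely the fixed surface $S_0$: it is constant cycle by \cite{Zh} and has class $5h^2-\frac13 c$ by \cite{Fe}. Voisin's argument then shows $S_0$ meets every other constant cycle surface, so the common point lies in $P^4(X)=\QQ[h^4]$.
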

  
  \begin{proof} We consider the (a priori) smaller $\QQ$-algebra
    \[ P^\ast(X):= \bigl\langle  A^1(X), c_j(T_X), A^2(X)^+ \bigr\rangle\ \ \subset\ A^\ast(X)\ , \]  
    leaving out the constant cycle surfaces. We claim there is equality
    \begin{equation}\label{equal}  P^i(X)= R^i(X)\ \ \hbox{for\ all}\ i\ .\end{equation}
 Since $P^i(X)$ is known to inject into cohomology for $i\ge 3$ \cite[Theorem 3.1]{L}, this claim implies the theorem.   
 
 To prove the claim for $i=4$, we note that there is at least one constant cycle surface present in any double EPW sextic. Indeed, Zhang has shown \cite[Theorem 4.14]{Zh} that the surface of fixed points $S_0\subset X$ is a constant cycle surface. It is known that the class of $S_0$ is $5h^2-{1\over 3} c  $  in $A^2(X)$ \cite[Lemma 4.1]{Fe}. An ingenious argument of Voisin \cite[Proof of Lemma 3.10]{Vcoiso} then implies that any other constant cycle surface $S_k$ must intersect $S_0$, and so all constant cycle surfaces $S_k$ represent the same point. Since $P^4(X)$ is one-dimensional by \cite[Theorem 3.1]{L} (or the earlier \cite[Theorem 1]{LV}), this point must be proportional to $h^4$; this settles the $i=4$ part of the claim.
 
 To prove the $i=2$ (and hence $i=3$) part of the claim \eqref{equal}, let $S=S_k$ be any constant cycle surface in $X$. Let us write
   \[ S = S^+ + S^-\ \ \hbox{in}\ A^2(X)\ ,\]
 where $S^+\in A^2(X)^+$ and $S^-\in A^2(X)^-$ (of course for $S=S_0$ we have $S^-=0$, but for arbitrary $S_k$ this is not clear).
 The map $\cdot h\colon H^2(X,\QQ)^- \to H^4(X,\QQ)^-$ is an isomorphism (indeed, $H^4(X,\QQ)=\sym^2 H^2(X,\QQ)$ and $H^2(X,\QQ)^+=\QQ[h]$), and so there exists a divisor $D\in A^1(X)^-$ such that $S^-=h\cdot D$ in cohomology, i.e.
   \[ S^- -h\cdot D\ \ \in A^2_{hom}(X)^-\ .\]
  
  At this point, we invoke the following equality of correspondences, involving the projector $\Delta_X^-:={1\over 2}(\Delta_X-\Gamma_\iota)$:
  
  \begin{proposition}[\cite{L}]\label{deltaminus}

Let $X$ be a smooth double EPW sextic, and let $\iota$ denote its covering involution. There is a relation

\[
\Delta_X^-= B + \mathcal{J}_1 + \dots + \mathcal{J}_r \ \ \hbox{in}\  A^4(X\times X)\ ,
\]
with the following properties:

\begin{itemize}
\item $B$ is decomposable, i.e.

$$B\in \langle (p_1)^*A^*_{}(X), (p_2)^*A^\ast_{}(X) \rangle ;$$

\item each $\mathcal{J}_i$ is a composition of the correspondences $\Delta_X^-$ and $I\cdot A$, where

\begin{align*}
I & \in  A^2_{}(X\times X),\\
A & \in  \langle (p_i)^*(h^2), (p_i)^*(c) \rangle, \ for\ i\in \{1,2\}&\\
\end{align*}
with $I\cdot A$ occurring at least once in each ${\mathcal J}_i$.

\end{itemize}

    \end{proposition}
    
\begin{proof} This is \cite[Proposition 3.2]{L}. (NB: as observed in \cite[Proposition 2.25]{BL}, the precise expression for $A$ given above is not part of the statement of \cite[Proposition 3.2]{L}, but is immediate from inspection of the proof of \cite[Proposition 3.2]{L}.)
    \end{proof}
 
 Applying the equality of correspondences of Proposition \ref{deltaminus} to the cycle class $ S^- -h\cdot D \in A^2_{hom}(X)^- $, we obtain an equality 
   \begin{equation}\label{act}  S^- - h\cdot D = \bigl( B + \mathcal{J}_1 + \dots + \mathcal{J}_r \bigr){}_\ast (S^- -h\cdot D)\ \ \hbox{in}\ A^2(X)\ .\end{equation}
Let us now check that the right-hand side of \eqref{act} is zero: first, the correspondence $B$, being decomposable, acts as zero on $A^\ast_{hom}(X)$. 

Let us assume the correspondence ${\mathcal J}_i$ contains
a cycle of the form $I\cdot (p_2)^\ast(a)$ where $a\in A^2(X)$. By the projection formula, we find that
  \[  \bigl(    I\cdot (p_2)^\ast(a)  \bigr){}_\ast (\gamma)  =    a\cdot \bigl( I_\ast(\gamma)\bigr)\ \ \hbox{in}\ A^2(X)\ ,\ \ \forall \gamma\in A^2_{hom}(X)\ .\]
But $I_\ast(\gamma)$ is in $A^0_{hom}(X)=0$ and so 
  \[  \bigl(    I\cdot (p_2)^\ast(a)  \bigr){}_\ast (\gamma)  =   0\ \ \hbox{in}\ A^2(X)\ \ \ \forall \gamma\in A^2_{hom}(X)\ ,\]
  hence also 
  \[  ({\mathcal J}_i)_\ast(\gamma)=0 \ \ \forall   \gamma\in A^2_{hom}(X)\ .\]   
  Next, let us assume  ${\mathcal J}_i$ contains
a cycle of the form $I\cdot (p_1)^\ast(D)\cdot (p_2)^\ast(D^\prime)$ where $D, D^\prime\in A^1(X)$. Then (again using the projection formula) we find that
  \[            \bigl(    I\cdot (p_1)^\ast(D)\cdot (p_2)^\ast(D^\prime)  \bigr){}_\ast (\gamma)  =    D^\prime\cdot \bigl( I_\ast(\gamma\cdot D)\bigr)\ \ \hbox{in}\ A^2(X)\ ,\ \ \forall \gamma\in A^2_{hom}(X)\ .\]
  Noting that $ I_\ast(\gamma\cdot D)\in A^1_{hom}(X)=0$ we conclude that
       \[            \bigl(    I\cdot (p_1)^\ast(D)\cdot (p_2)^\ast(D^\prime)  \bigr){}_\ast (\gamma)  = 0  \ \ \forall   \gamma\in A^2_{hom}(X)\ ,\]     
     and so once more we find 
         \[  ({\mathcal J}_i)_\ast(\gamma)=0 \ \ \forall   \gamma\in A^2_{hom}(X)\ .\]   
  The only remaining possibility is now that the correspondence ${\mathcal J}_i$ is built from $\Delta_X^-$ and $I\cdot (p_1)^\ast(a)$ where $a\in A^2(X)$, i.e. we may assume we can write
    \[  {\mathcal J}_i=  {\mathcal J}_i^\prime\circ \bigl(I\cdot (p_1)^\ast(a)\bigr)   \circ \Delta_X^-\ \ \hbox{in}\ A^4(X\times X)\ .\]
    Using the projection formula, we now find
    \[ \begin{split} ({\mathcal J}_i)_\ast   ( S^- -h\cdot D) &=    ({\mathcal J}^\prime_i)_\ast \bigl(I\cdot (p_1)^\ast(a)\bigr){}_\ast (S^- -h\cdot D)\\       
                                                                     & =  ({\mathcal J}^\prime_i)_\ast I_\ast  ( S^-\cdot a - h\cdot D\cdot a)\ .\\
                                                                     \end{split} \]
 The cycle $a$ is a linear combination of $c$ and $h^2$, hence 
   \[  h\cdot D\cdot a\ \in \QQ[h^4] \]
  since we know that $P^4(X)=\QQ[h^4]$. Also,
   \[  S^-\cdot a = (S- S^+)\cdot a \ \in \QQ[h^4]\ ,\]
   since both $S\cdot a$ and $S^+\cdot a$ are in $P^4(X)=\QQ[h^4]$.  
   Since $S^- -h\cdot D$ is homologically trivial, the zero-cycle $S^-\cdot a - h\cdot D\cdot a$ has degree zero, hence is rationally trivial, and so we conclude once again that
     \[ ({\mathcal J}_i)_\ast   ( S^- -h\cdot D)=0      \ \ \hbox{in}\ A^2(X)\ .\]
     In view of the equality \eqref{act}, we thus conclude 
     \[ S^- =h\cdot D\ \ \hbox{in}\ A^2(X)\ .\]
     This proves equality \eqref{equal} for all $i$, and closes the proof of the theorem.
        \end{proof}

  \begin{remark} So far, there is no known ``motivic multiplicative splitting'' of the Chow ring of double EPW sextics, similar to what Shen--Vial have done for Hilbert squares and Fano varieties of lines in cubics \cite{SV}. Hence, the arguments of the two previous sections break down for double EPW sextics. Luckily, the quadratic relation of Proposition \ref{deltaminus} allows to prove a similar result.
  
What we actually prove above is that constant cycle surfaces $S$ in a double EPW sextic $X$ verify
    \[  S \ \ \in\ A^2(X)^+ \oplus A^1(X)^+\cdot A^1(X)^-\ .\]
    Once a satisfactory theory giving a bigrading $A^\ast_{(\ast)}(X)$ has been developed (along the lines of \cite{SV}), it should certainly be the case that
     \[  A^2(X)^+ \oplus A^1(X)^+\cdot A^1(X)^-\ \subset\ A^2_{(0)}(X) \ .\]
  In this sense, the argument for double EPW sextics we give here is analogous to the argument for Fano varieties of lines given in the prior section.   
        \end{remark}    
        
        \begin{remark}\label{hard2} As the sextic $Z\subset\PP^5$ is a (slightly singular) Calabi--Yau variety, the Bloch--Beilinson conjectures would imply that 
          \[ A^2_{hom}(Z)=A^2_{hom}(X)^+=0\ .\]
        This seems very hard to prove; I am not aware of any Calabi--Yau variety for which this has been verified.
          \end{remark}

%

\vskip0.5cm

 \begin{nonumberingt} I am grateful to the referee for pertinent remarks that helped to improve the paper. Thanks to Yotam Ottolenghi for his inspiring books. 
\end{nonumberingt}


\vskip1cm
\begin{thebibliography}{dlPG99}

\bibitem{B1} A. Beauville, Sur l'anneau de Chow d'une vari\'et\'e ab\'elienne, Math. Ann. 273 (1986), 647---651,

\bibitem{Beau} A. Beauville, On the splitting of the Bloch--Beilinson filtration, in: Algebraic cycles and motives (J. Nagel and C. Peters, editors), London Math. Soc. 
Lecture Notes 344, Cambridge University Press 2007,

\bibitem{BD} A. Beauville and R. Donagi, La vari\'et\'e des droites d'une hypersurface cubique de dimension 4, C. R. Acad. Sci., Paris, Sér. I 301 (1985), 703---706,

\bibitem{BV} A. Beauville and C. Voisin, On the Chow ring of a K3 surface, J. Alg. Geom. 13 (2004), 417---426,


\bibitem{BL} M. Bolognesi and R. Laterveer, On the Chow ring of Fano fourfolds of K3 type, in: ``Perspectives on four decades: Algebraic geometry 1980---2020. In memory of Alberto Collino'', Trends in Mathematics, Birkha\"user,

\bibitem{BL2} M. Bolognesi and R. Laterveer, Double EPW sextics and the Voisin filtration on zero-cycles, Math. Z. 310 no. 2 (2025),


%
%
%
%
%
%
%
%
%
%
\bibitem{Fe} A. Ferretti, The Chow ring of double EPW sextics, Algebra Number Theory 6 (2012), no. 3, 539---560,
%

\bibitem{Fu} L. Fu, Beauville--Voisin conjecture for generalized Kummer varieties, Int. Math. Res. Not. 2015, No. 12 (2015), 3878---3898,
		
\bibitem{FLV} L. Fu, R. Laterveer and Ch. Vial, The generalized Franchetta conjecture for some hyper-K\"ahler varieties (with an appendix joint with M.
Shen), Journal Math. Pures et Appliqu\'ees (9) 130 (2019), 1---35,

%
%
%
%
%
%
%
%
%
%
%
%
%

		
%
%
%
%
%
%
%
%
%
\bibitem{23.6} R. Laterveer, Algebraic cycles on some special hyperk\"ahler varieties, Rendiconti di Matematica e delle sue applicazioni 38 no. 2 (2017), 243---276, 

\bibitem{L} R. Laterveer, The Beauville--Voisin conjecture for double EPW sextics, Kyoto Journal of Mathematics 65 no. 4 (2025), 715---736,

\bibitem{LV} R. Laterveer and Ch.Vial, Zero-cycles on double EPW sextics, Commun. Contemp. Math. 23 no. 4 (2021), 
%

\bibitem{MN} D. Maulik and A. Negut, Lehn's formula in Chow and conjectures of Beauville and Voisin, J. Inst. Math. Jussieu 21 no. 3 (2022), 933---971,

\bibitem{Maz} C. Mazzanti, On the Chow ring of double EPW quartics, arXiv:2603.02251,


\bibitem{M} J. Murre, On a conjectural filtration on the Chow groups of an algebraic variety, parts I and II, Indag. Math. 4 (1993), 177---201,

\bibitem{NOY} A. Negut, G. Oberdieck and Q. Yin, Motivic decompositions for the Hilbert scheme of points of a K3 surface, J. Reine Angew. Math. 778 (2021), 65---95,
%
%
\bibitem{OG2} K. O'Grady, Irreducible symplectic $4$-folds and Eisenbud--Popescu--Walter sextics, Duke Math. J. {134} no. 1 (2006), 99---137,
%
%
\bibitem{OG4} K. O'Grady, EPW-sextics: taxonomy, Manuscripta Math. {138} no. 1 (2012), 221--272,
		
\bibitem{OG5} K. O'Grady, Double covers of EPW-sextics, Michigan Math. J. {62} (2013), 143--184,
%
%

\bibitem{Rie} U. Rie{\ss}, On Beauville's conjectural weak splitting property, Int. Math. Res. Not. 2016, No. 20, 6133---6150, 
%
%
%
\bibitem{SV} M. Shen and Ch. Vial, The Fourier transform for certain hyperK\"ahler fourfolds, Memoirs of the AMS 240 (2016),
%
%

\bibitem{Via} Ch. Vial, Niveau and coniveau filtrations on cohomology groups and Chow groups, Proceedings London Math. Soc. 106 no. 2 (2013), 410---444,

\bibitem{V6} Ch. Vial, On the motive of some hyperk\"ahler varieties, J. Reine Angew. Math. 725 (2017), 235---247,

%
%
%

\bibitem{V0} C. Voisin, Intrinsic pseudo-volume forms and K-correspondences, in: The Fano conference (Eds. A. Collino et alii), Univ. Torino, Torino 2004,


\bibitem{V17} C.~Voisin, On the Chow ring of certain algebraic hyperk\"ahler manifolds, Pure Appl. Math. Q. 4 no. 3 part 2 (2008), 
613---649,

\bibitem{Vo} C. Voisin, Chow Rings, Decomposition of the Diagonal, and the Topology of Families, Princeton University Press, Princeton and Oxford, 2014,

\bibitem{Vcoiso} C. Voisin, Remarks and questions on coisotropic subvarieties and 0-cycles of hyper-Kähler varieties, in: K3 surfaces and their moduli, 365---399,
Progr. Math. 315, Birkhäuser 2016,

%
%
%

\bibitem{Zh} R. Zhang, One-cycles on Gushel--Mukai fourfolds and the Beauville--Voisin filtration, Science China Mathematics 67 (2024), 713---732,




\end{thebibliography}
\end{document}